\documentclass[11pt]{article}

\usepackage{tikz}
\usepackage{subfigure}
\usepackage[english]{babel}

\usepackage[center]{caption2}
\usepackage{amsfonts,amssymb,amsmath,latexsym,amsthm}
\usepackage{multirow}
\usepackage[usenames,dvipsnames]{pstricks}
\usepackage{epsfig}
\usepackage{pst-grad} 
\usepackage{pst-plot} 
\usepackage[space]{grffile} 
\usepackage{etoolbox} 
\makeatletter 
\patchcmd\Gread@eps{\@inputcheck#1 }{\@inputcheck"#1"\relax}{}{}

\topmargin  = -0.4 in \oddsidemargin = 0.25 in
\setlength{\textheight}{8.5in} \setlength{\textwidth}{6in}
\setlength{\unitlength}{1.0 mm}

\def\qedB{{\hfill\enspace\vrule height8pt depth0pt width8pt}}


\newtheorem{claim}{Claim}

\begin{document}

\title{\bf\Large On a conjecture of Bondy and Vince}

\date{}

\author{
Jun Gao\footnote{Email: gj0211@mail.ustc.edu.cn.}~~~~~~~
Jie Ma\footnote{Email: jiema@ustc.edu.cn.
Research supported in part by NSFC grants 11501539 and 11622110 and Anhui Initiative in Quantum Information Technologies grant AHY150200.}\\
\medskip \\
School of Mathematical Sciences\\
University of Science and Technology of China\\
Hefei, Anhui 230026, China.
}

\maketitle

\begin{abstract}
Twenty years ago Bondy and Vince conjectured that for any nonnegative integer $k$, except finitely many counterexamples,
every graph with $k$ vertices of degree less than three contains two cycles whose lengths differ by one or two.
The case $k\leq 2$ was proved by Bondy and Vince, which resolved an earlier conjecture of Erd\H{o}s et. al..
In this paper we confirm this conjecture for all $k$.
\end{abstract}



The study on cycles has been extensive in the literature and
various problems on the existence of cycles with specified lengths were investigated decades ago (e.g., Erd\H{o}s \cite{Erd95, Erd97}).
One such question, proposed by Erd\H{o}s et. al. (see the discussion in \cite{BV}), asked whether every graph with minimum degree at least
three contains two cycles whose lengths differ by one or two.
This was answered affirmatively by Bondy and Vince \cite{BV} in the following stronger theorem:
with the exception of $K_1$ and $K_2$, every graph having at most two vertices of degree less than three contains two cycles of lengths differing by one or two.
They further conjectured the following generalization. (All graphs referred here are simple.)

\medskip

{\bf \noindent Conjecture.} (Bondy and Vince \cite{BV})
Let $k$ be any nonnegative integer. With finitely many exceptions, every graph having at most $k$ vertices of degree less than three has two cycles whose lengths differ by one or two.

\medskip

The authors of \cite{BV} remarked that they can also prove for $k=3$ (with twelve exceptions)
and the case $k=4, 5$ likely can be solved using their analysis.
We mention that the theorem in \cite{BV} was generalized in another direction by Fan \cite{Fan}.
For related topics, results and open problems, we refer interested readers to \cite{HS98,LM,SV08,V00,V16} and their references.

In this paper, we confirm the above conjecture of Bondy and Vince by the following.

\medskip

{\bf \noindent Theorem.}
Every graph, having at most $k$ vertices of degree less than three and at least $5k^2$ vertices, contains two cycles whose lengths differ by one or two.
		
\medskip

This also can be viewed as a resilience type answer to the above question of Erd\H{o}s et. al..
Let $G$ be an $n$-vertex graph with minimum degree at least three.
Then one can derive that by deleting any $\sqrt{n}/5$ edges from $G$, the remaining graph still contains two cycles of lengths differing by one or two.
Also by repeating the following procedure: first apply this theorem to find a pair of
two cycles of lengths differing by one or two and then delete two edges to destroy these two cycles,
one can in fact find $\Omega(\sqrt{n})$ such pairs of cycles in $G$.


For the proof, we will need a lemma of Bondy and Vince \cite{BV} (the proof of which uses an argument based on Thomassen and Toft \cite{Non-separating}).
Let $C$ be a cycle in a graph $G$.
A {\it bridge} of $C$ is either a chord of $C$ or a subgraph of $G$ obtained from a component $B$ in $G-V(C)$ by adding all edges between $B$ and $C$.
We call vertices of the bridge not in $C$ {\it internal}.

\medskip

{\bf \noindent Lemma.} (\cite{BV})
Let $G$ be a 2-connected graph, not a cycle, and let $C$ be an induced cycle in $G$ some bridge $B$ of which has as many internal vertices as possible.
Then either $B$ is the only bridge of $C$, or else that $B$ is a bridge containing exactly two vertices $u,v$ in $V(C)$ and every other bridge of $C$ is a path from $u$ to $v$.

\medskip

We are ready to present the proof of our result.

\medskip

{\noindent \bf Proof of Theorem.}
Throughout this proof, let $\mathcal{B}(G)$ denote the set of all vertices with degree at most two in a graph $G$,
and we say a pair of cycles is {\it good} if their lengths differ by one or two.
Let $f(1)=f(2)=3, f(3)=14, f(4)=56, f(5)=116$ and $f(k)=5k^2$ for $k\geq 6$
so that $\{f(k)-f(k-1)\}$ is strictly increasing and
\begin{align}\label{equ:f}
f(k)\geq f(k-1)+7k+f(3) \mbox{ holds for all }k\geq 4.
\end{align}
We will prove by induction on $k$ that every graph $G$ with $|\mathcal{B}(G)|\leq k$ and at least $f(k)$ vertices contains a good pair of cycles.
The case $k\leq 2$ follows by the aforementioned theorem of Bondy and Vince \cite{BV},
so we may assume that $k\geq 3$ and the statement holds for any integer $\ell<k$.

We begin by showing some useful facts on graphs $H$ with $|\mathcal{B}(H)|=k$.
For a subgraph $F$ of $H$, by $H-F$ we denote the graph obtained from $H$ by deleting all vertices of $F$.

\begin{claim}\label{Claim: 1}
Let $H$ be a graph with $|\mathcal{B}(H)|= k$, minimum degree $\delta(H)\geq 2$ and no good pair of cycles.
If $k=3$ or $k\geq 4$ and $|V(H)|\geq f(k-1)+f(3)$, then $H$ is 2-connected.
\end{claim}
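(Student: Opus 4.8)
The plan is to assume $H$ is \emph{not} $2$-connected and to exhibit a good pair of cycles in some subgraph of $H$; since passing to a subgraph preserves cycle lengths and $H$ has no good pair, any such pair is the desired contradiction. Three inputs will be used repeatedly: (i) every connected graph of minimum degree at least $2$ has at least three vertices and contains a cycle; (ii) the Bondy--Vince theorem, which yields a good pair in any graph on at least three vertices having at most two vertices of degree less than three; and (iii) the induction hypothesis, that for each $\ell<k$ every graph with at most $\ell$ vertices of degree less than three and at least $f(\ell)$ vertices has a good pair. I also use that $\{f(k)-f(k-1)\}$ is strictly increasing, so $f$ is nondecreasing and convex.

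\emph{Case 1: $H$ is disconnected}, with components $C_1,\dots,C_m$, $m\ge 2$. Then $\delta(C_i)\ge 2$ and $\mathcal{B}(C_i)=\mathcal{B}(H)\cap V(C_i)$, so $|V(C_i)|\ge 3$ by (i). If $|\mathcal{B}(C_i)|\le 2$ for some $i$, then (ii) gives a good pair in $C_i$, a contradiction; hence $|\mathcal{B}(C_i)|\ge 3$ for every $i$, and $\sum_i|\mathcal{B}(C_i)|=k$. When $k=3$ this is already impossible. When $k\ge 4$, each $|\mathcal{B}(C_i)|\le k-3<k$, so (iii) forces $|V(C_i)|<f(|\mathcal{B}(C_i)|)$, and summing together with the counting inequality stated below gives $|V(H)|<f(k-1)+f(3)$, contradicting the hypothesis on $|V(H)|$.

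\emph{Case 2: $H$ is connected with a cut vertex $v$.} Let $C_1,\dots,C_m$, $m\ge 2$, be the components of $H-v$, put $G_i:=H[V(C_i)\cup\{v\}]$, and set $c_i:=|\mathcal{B}(H)\cap V(C_i)|$. Each vertex of $C_i$ has the same degree in $G_i$ as in $H$, so $|\mathcal{B}(G_i)|\le c_i+1$ and $\sum_i c_i\le k$, while $\delta(H)\ge 2$ forces $|V(C_i)|\ge 2$ and hence $|V(G_i)|\ge 3$. If some $c_i\le 1$, then $|\mathcal{B}(G_i)|\le 2$ and (ii) gives a good pair in $G_i$, a contradiction; so $c_i\ge 2$ for all $i$, whence $2m\le k$. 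In particular this case is vacuous when $k=3$, and $m\le k-2$ when $k\ge 4$. If some $c_i\ge k-1$, the other components together contain at most one vertex of $\mathcal{B}(H)$, contradicting $c_j\ge 2$ for $j\ne i$; hence $2\le c_i\le k-2$, so $c_i+1<k$ and (iii) forces $|V(G_i)|<f(c_i+1)$. Since $|V(H)|-1=\sum_i(|V(G_i)|-1)$ and $m\ge 2$, we obtain $|V(H)|<\sum_i f(c_i+1)$, and the inequality below again gives $|V(H)|<f(k-1)+f(3)$, a contradiction.

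The remaining ingredient is: \emph{if $m\ge 2$ and $d_1,\dots,d_m\ge 3$ are integers with $m\le k-2$ and $\sum_i(d_i-2)\le k-2$, then $\sum_i f(d_i)\le f(k-1)+f(3)$.} This covers both cases --- take $d_i=|\mathcal{B}(C_i)|$ (where $\sum_i(d_i-2)=k-2m\le k-4$) in Case 1, and $d_i=c_i+1$ (where $\sum_i(d_i-2)=(\sum_i c_i)-m\le k-m\le k-2$) in Case 2. By convexity of $f$ one may repeatedly move a unit between two of the $d_i$, from a non-maximum $d_j>3$ to a largest one and keeping all values at least $3$, without decreasing $\sum_i f(d_i)$; this reduces to the case where $m-1$ of the $d_i$ equal $3$, and since $f$ is nondecreasing it bounds $\sum_i f(d_i)$ by $f(k+1-m)+(m-1)f(3)$. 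The latter is maximized over $m\in\{2,\dots,k-2\}$ at $m=2$: lowering $m$ by one raises the first term by an increment $f(j)-f(j-1)$ of $f$ at some argument $j\ge 4$, hence by at least $f(4)-f(3)=42$, while lowering the other terms by only $f(3)=14$. So $\sum_i f(d_i)\le f(k-1)+f(3)$, completing the proof. I expect this convexity bookkeeping, together with the care needed (in Case 2) to keep each reduced piece $G_i$ with strictly fewer than $k$ vertices of degree less than three although the cut vertex $v$ adds one, to be the crux; the rest is routine.
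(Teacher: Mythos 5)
Your proof is correct and follows essentially the same strategy as the paper: rule out disconnectedness and then a cut vertex by applying the Bondy--Vince theorem to pieces with at most two low-degree vertices, the induction hypothesis to the remaining pieces, and the convexity of $f$ to reach a contradiction with $|V(H)|\geq f(k-1)+f(3)$. The only difference is bookkeeping: the paper always splits into exactly two parts (one component versus the rest, which need not be connected but still admits the induction hypothesis), reducing the convexity step to the single inequality $f(b_1+1)+f(b_2+1)\leq f(k-1)+f(3)$, whereas you decompose into all $m$ components and run a general $m$-variable smoothing argument.
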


\begin{proof}
First we show that $H$ is connected. Otherwise, there exist at least two components of $H$.
Let $D_1$ be one of the components of $H$ and let $D_2=H-D_1$.
Let $j=|\mathcal{B}(D_1)|$. So $|\mathcal{B}(D_2)|=k-j$.
Since $\delta(H)\geq 2$, we see $|V(D_i)|\geq 3$ for each $i\in [2]$.
By the theorem of Bondy and Vince, we may assume that $3\leq j\leq k-3$.
Since $\{f(k)-f(k-1)\}$ is strictly increasing,
$|V(D_1)|+|V(D_2)|=|V(H)|\geq f(k-1)+f(3)\geq f(j)+f(k-j)$.
So either $|V(D_1)|\geq f(j)$ or $|V(D_2)|\geq f(k-j)$.
By induction, in either case we can find a good pair of cycles. This shows that $H$ is connected.

Suppose that there exists a cut-vertex $u$ in $H$.
Let $B_1$ be a component of $H-\{u\}$ and $B_2=H-\{u\}-B_1$.
For $i\in [2]$, let $b_i=|\mathcal{B}(H)\cap B_i|$.
It is clear that $k-1\leq b_1+b_2\leq k$.
We also have $b_i\geq 2$. Indeed, otherwise say some $b_i\leq 1$,
then $H_i=H[B_i\cup \{u\}]$ is a graph with at least three vertices (since $\delta(H)\geq 2$) and at most two vertices of degree less than three;
by the theorem of Bondy and Vince, it contains a good pair of cycles (so does $H$), a contradiction.
This already proves for $k=3$ (as it is impossible to have $b_1+b_2\geq 4$).
Now we consider $k\geq 4$.
Since $H_i$ contains no good pair of cycles and $|\mathcal{B}(H_i)|\leq b_i+1\leq k-1$,
by induction we have
$f(b_1+1)+f(b_2+1)> |V(H_1)|+|V(H_2)|>|V(H)|\geq f(k-1)+f(3)=\max_{2\leq s\leq k/2} \{f(s+1)+f(k-s+1)\}$\footnote{Here the equality follows from the fact that $f(k)-f(k-1)$ is increasing.},
again a contradiction. This finishes the proof.			
\end{proof}

A cycle $C$ in a graph $H$ is called {\it feasible} if it is induced and $H-C$ is connected.

\begin{claim}\label{Claim: 2}
Let $H$ be a 2-connected graph with $|\mathcal{B}(H)|=k$ and no good pair of cycles.
If $|V(H)|\geq f(k-1)+1$, then there exists a feasible cycle in $H$.
\end{claim}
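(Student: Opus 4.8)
The plan is to apply the Bondy--Vince Lemma to $H$. Since $|V(H)|\ge f(k-1)+1>k=|\mathcal B(H)|$ and every vertex of a cycle has degree two, $H$ is not a cycle; as $H$ is also $2$-connected, the Lemma gives an induced cycle $C$ and a bridge $B$ of $C$ with as many internal vertices as possible, and I would split along the two alternatives it offers. If $B$ is the \emph{only} bridge of $C$, then since $C$ is induced (hence chordless) the bridge $B$ comes from a component of $H-V(C)$; that component is all of $V(H)\setminus V(C)$ and is nonempty (otherwise $H$ would equal $C$, a cycle), so $H-C$ is connected and $C$ is the feasible cycle we want.

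So assume we are in the other alternative: $B$ meets $C$ in exactly two vertices $u,v$, every other bridge of $C$ is a $u$--$v$ path internally disjoint from $C$, and --- since this is not the first alternative --- there is at least one such path, say $R_1,\dots,R_m$ with $m\ge 1$. The idea is to show this cannot happen. Put $\widetilde B:=B\cup C$, a subgraph of $H$ that contains the cycle $C$ and hence has at least three vertices. In $H$ every internal vertex of an arc of $C$ and of every $R_i$ has degree exactly two (its neighbours lie on $C$, resp.\ on $R_i$, since the bridges of $C$ attach to $C$ only at $u,v$), so all these vertices belong to $\mathcal B(H)$; and $\deg_H(u),\deg_H(v)\ge m+3\ge 4$ (two edges along $C$, one into each $R_i$, and at least one into $B$, the last because $H$ is $2$-connected). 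Therefore, in $\widetilde B$, the internal vertices of the $R_i$ are gone, $u$ and $v$ have degree at least three, and the only vertices of degree less than three are those of $\mathcal B(H)\cap B'$ together with the (now degree-two) internal vertices of the arcs of $C$. Writing $s:=\sum_i|\mathrm{int}(R_i)|\ge m\ge 1$, a direct count gives $|\mathcal B(\widetilde B)|=k-s\le k-1$ and $|V(\widetilde B)|=|V(H)|-s\ge f(k-1)+1-s$.

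Because the successive differences $f(j)-f(j-1)$ are nondecreasing and are at least $11$ for $j\ge 3$, one checks the elementary inequality $f(k-1)+1-s\ge f(k-s)$ for every $1\le s\le k$, so $|V(\widetilde B)|\ge f\big(|\mathcal B(\widetilde B)|\big)$. Hence the induction hypothesis of the theorem, applied to $\widetilde B$ with parameter $|\mathcal B(\widetilde B)|<k$ (when this parameter is at most $2$ it is the Bondy--Vince theorem, which applies since $\widetilde B\notin\{K_1,K_2\}$ as $|V(\widetilde B)|\ge 3$), supplies two cycles of $\widetilde B$ whose lengths differ by one or two; these are cycles of $H$ of the same lengths, contradicting the assumption that $H$ has no good pair. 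So the second alternative is impossible, $C$ has a single bridge, and $C$ is feasible.

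I expect the main obstacle to be locating the right subgraph in the second case. Working with the bridge $B$ alone does not suffice, because its attachment vertices $u,v$ may well have degree two in $B$, so $|\mathcal B(B)|$ need not drop below $k$; and trying to build a feasible cycle directly inside the theta-like structure of case (b) is awkward, since every cycle through both $u$ and $v$ leaves $H$ disconnected. The resolution is to reattach the whole cycle $C$: this forces $\deg_{\widetilde B}(u),\deg_{\widetilde B}(v)\ge 3$ while adding only the two arc-interiors to the low-degree set, and these are outweighed by the $R_i$-interiors that leave $\widetilde B$, so $|\mathcal B(\widetilde B)|$ is \emph{strictly} below $k$. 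Once $\widetilde B$ is in hand the remainder is just the numerical check $f(k-1)+1-s\ge f(k-s)$ for $1\le s\le k$, which the chosen values of $f$ and the monotonicity of its differences make routine (note it is tight, indeed an equality, at $s=1$, so there is no slack to spare).
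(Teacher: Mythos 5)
Your proof is correct and follows essentially the same route as the paper: apply the Bondy--Vince Lemma, note that the first alternative gives a feasible cycle directly, and in the second alternative delete degree-two internal vertices of the parallel $u$--$v$ path(s) to drop $|\mathcal{B}|$ below $k$ and invoke induction. The paper's only (cosmetic) difference is that it deletes a single internal vertex of one such path, which immediately gives $|\mathcal{B}|\le k-1$ and $|V|\ge f(k-1)$ and so avoids your extra inequality $f(k-1)+1-s\ge f(k-s)$ --- which, as stated for all $1\le s\le k$, in fact fails at the boundary (e.g.\ $k=3$, $s=2$, and $f(0)$ is undefined at $s=k$), though your separate Bondy--Vince fallback for $|\mathcal{B}(\widetilde B)|\le 2$ covers exactly those cases, so no genuine gap results.
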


\begin{proof}
We see that $H$ is not a cycle.
Let $C$ be an induced cycle in $H$ such that some bridge $B$ of $C$ has the maximum number of internal vertices.
If $B$ is the only bridge of $C$, then $C$ is feasible in $H$ and we are home.
Therefore, by Lemma we may assume that $B$ is a bridge containing exactly two vertices $u,v \in V(C)$ and
there exists another bridge $P$ of $C$ which is a path from $u$ to $v$.
Let $P = v_0...v_t$ with $u=v_0$ and $v=v_t$, where we have $t\geq 2$, $d_H(u)\geq 4$ and $d_H(v)\geq 4$.
Let $R= H-\{v_1\}$. Then $|V(R)|\geq f(k-1)$ and $|\mathcal{B}(R)|\leq k-1$.
By induction, $R$ (and thereby $H$) has a good pair of cycles, a contradiction.
\end{proof}

For $F, F'\subseteq H$, let $N_F(F')$ be the set of vertices in $F$ adjacent to some vertex in $F'$.

\begin{claim}\label{Claim: 3}
Let $H$ be a 2-connected graph with $|\mathcal{B}(H)|=k$ and no good pair of cycles,
whose order is at least $f(2)+4$ for $k=3$ and at least $f(k-1)+f(3)+2k$ for $k\geq 4$.
Let $C$ be any feasible cycle in $H$ and let $A=N_C(H-C)$.
Then $C$ has length at most $2k$ and divisible by four, whose vertices alternate between $A$ and $\mathcal{B}(H)\cap V(C)$.
\end{claim}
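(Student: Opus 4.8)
The plan is to reduce the whole statement to the single assertion that \emph{every maximal arc of $C$ between two consecutive vertices of $A$ has length exactly two}. Write $D=H-C$, which is connected because $C$ is feasible, and set $W=V(C)\setminus A$. Since $C$ is induced, every $v\in V(C)$ has $d_H(v)=2+|N_D(v)|$, so $v\in\mathcal{B}(H)$ if and only if $v\notin A$; hence $W=\mathcal{B}(H)\cap V(C)$ (so $|W|\le k$) and $A\cap\mathcal{B}(H)=\emptyset$. Also $|A|\ge 2$: $D\ne\emptyset$ because $|V(H)|>|V(C)|$, and one or zero vertices of $A$ would yield a cut-vertex, contradicting $2$-connectivity. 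Now suppose every arc has length $2$. Then the cyclic order on $C$ alternates between $A$ and $W$, so $|V(C)|=2|A|=2|W|\le 2k$ and the vertices of $C$ alternate between $A$ and $\mathcal{B}(H)\cap V(C)$, as claimed. Moreover, for any $a_i,a_j\in A$, since $D$ is connected there is a path $P$ from $a_i$ to $a_j$ with interior in $D$, say of length $p$; closing $P$ with each of the two $C$-arcs between $a_i$ and $a_j$ gives cycles of lengths $p+2d$ and $p+2(|A|-d)$ for the appropriate $d\in\{1,\dots,|A|-1\}$. If some such pair were good then (their difference being even) $2|A|=4d\pm2$, i.e.\ $|A|=2d\pm1$; forbidding this for every $d\in\{1,\dots,|A|-1\}$ rules out every odd value in $\{1,3,\dots,2|A|-1\}$, forcing $|A|$ even, i.e.\ $4\mid|V(C)|$. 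So it remains only to show that no arc has length $1$ or length $\ge 3$.

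Arc lengths are bounded above by a deletion-plus-induction argument. If some arc has length $\ge 4$, it contains an internal vertex $w$ both of whose neighbours are internal, hence of degree $2$; then $|\mathcal{B}(H-w)|\le k-1$ while $|V(H-w)|\ge f(k-1)$, so the induction hypothesis (or, for $k=3$, the base case $k\le 2$) gives a good pair in $H-w$, hence in $H$, a contradiction. A variant of this --- deleting the internal vertex of a length-$3$ arc that is adjacent to an endpoint $a$ --- shows that a length-$3$ arc forces $d_H(a)=3$ at both of its endpoints, since otherwise the same deletion again drops $|\mathcal{B}|$ below $k$. For a length-$1$ arc $a_ia_{i+1}$ one instead reads off structure from the theta-configuration formed by the edge $a_ia_{i+1}$, the long $C$-arc, and a shortest path from $a_i$ to $a_{i+1}$ with interior in $D$: forbidding good pairs among the three cycles it contains shows that $a_i$ and $a_{i+1}$ have no common neighbour in $D$ and that this shortest path has length at least $4$.

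The crux is to rule out arcs of length $1$ and length $3$ outright, and here the size hypothesis is essential: it guarantees that $D$ is large while at most $k$ of its vertices have $H$-degree at most $2$, so in particular $D$ is rich in cycles. The target is: for the two vertices $a,a'\in A$ bounding a putative short arc, produce two paths from $a$ to $a'$ with interior in $D$ whose lengths differ by $1$ or $2$; substituting each of them for the fixed short arc (of length $1$ or $3$) then yields two cycles whose lengths differ by $1$ or $2$, the desired contradiction. Producing such a pair of $D$-paths --- by using the connectivity of $D$ and the abundance of cycles near the neighbourhoods $N_D(a)$ and $N_D(a')$ to get, say, two internally disjoint routes realising lengths of opposite parities, or a short cycle along which one reroutes and shifts the length by one --- and then disposing of the few awkward local configurations that resist this, is the delicate part and is the main obstacle. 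All other steps are either routine deletions feeding the induction hypothesis or the short counting and parity computations above; once every arc is shown to have length $2$, the three conclusions of the claim are exactly those derived in the first paragraph.
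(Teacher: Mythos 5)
Your plan correctly reduces the claim to the single assertion that every arc of $C$ between consecutive vertices of $A$ has length exactly two, and your preliminary steps (the identification $V(C)\setminus A=\mathcal{B}(H)\cap V(C)$, the exclusion of arcs of length at least four by deleting a middle internal vertex of degree two, and the parity argument forcing $|A|$ even) are all sound. But the proof is not complete: you yourself flag that ruling out arcs of length $1$ and length $3$ is ``the main obstacle,'' and what you offer for those cases (that a length-$3$ arc forces both endpoints to have degree exactly $3$, and that the endpoints of a length-$1$ arc have no common neighbour in $H-C$ and are joined only by long paths through $H-C$) does not yield a contradiction. There is no argument given that actually produces the two $H\!-\!C$-paths of lengths differing by one or two, and it is not clear one exists by purely local means. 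So this is a genuine gap, and it sits exactly where the real work of the claim lies.

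The paper closes this gap by a global counting argument rather than a local one, and the difference is worth internalizing. First it shows that no two vertices of $A$ can sit at distance $\lfloor r/2\rfloor+1$ along $C$ (the path through $H-C$ plus the two near-antipodal arcs would give a good pair); this alone yields the upper bound $|A|\le |C|/2$. Second --- and this is the step your proposal has no counterpart for --- it deletes \emph{all} of $\mathcal{B}(H)\cap V(C)$ at once: if $|A|<|C|/2$ this strictly decreases $|\mathcal{B}|$, and the order hypothesis guarantees the remaining graph is large enough for the induction hypothesis to apply. Hence $|A|=|\mathcal{B}(H)\cap V(C)|=|C|/2$ exactly, which gives $|C|\le 2k$ immediately. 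Only then does it exclude two consecutive $A$-vertices, by a counting argument that plays the exact equality $|A|=|C|/2$ off against the antipodal restriction: assuming $u_1,u_2\in A$ forces four specified vertices out of $A$ and leaves too few slots for $A$ to reach size $|C|/2$. In other words, length-$1$ arcs are eliminated not by exhibiting a good pair near them but by showing they are incompatible with a cardinality identity established independently. Your local approach would need $|A|=|C|/2$ as an input, but in your scheme that equality is only available \emph{after} all arcs are known to have length two, so the argument cannot be completed along the lines you sketch without importing the paper's global deletion step.
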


\begin{proof}
Let $C =u_1u_2...u_ru_1$. First we show that
there is no pair $u_i,u_{i+\lfloor r/2 \rfloor+1}\in A$ where the subscript is modulo $r$.
Otherwise, there exists a path $P$ with endpoints $u_i, u_{i+\lfloor r/2 \rfloor+1}$ and all internal vertices in $H-C$,
which together with the two segments of $C$ between $u_i$ and $u_{i+\lfloor r/2 \rfloor+1}$ form a good pair of cycles in $H$.
This also implies that $|A|\leq |C|/2$.
Now suppose that $|A|<|C|/2$.
Since $C$ is induced, we see that $V(C)$ is partitioned into $A$ and $\mathcal{B}(H)\cap V(C)$.
Let $H'$ be obtained from $H$ by deleting $\mathcal{B}(H)\cap V(C)$.
Then $|\mathcal{B}(H')|\leq k-|\mathcal{B}(H)\cap V(C)|+ |A|\le k-1$ and,
as $|\mathcal{B}(H)\cap V(C)|\leq k$, we have $|V(H')|\geq |V(H)|-k\geq f(k-1)$.
By induction, $H'$ contains a good pair of cycles, a contradiction.
So $|A|=|\mathcal{B}(H)\cap V(C)|=|C|/2$ and $C$ is an even cycle of length at most $2k$.
Let $r=2s$.

Suppose there exist two consecutive vertices in $C$, say $u_1,u_2$, belonging to $A$.
As noted above there is no $u_i,u_{i+s+1} \in A$ for any $i$.
So $u_s, u_{s+1}, u_{s+2}, u_{s+3}$ are not in $A$ and
the pair $u_i, u_{i+s+1}$ for any $3\leq i\leq s-1$ has at most one vertex in $A$,
which together imply that $|A|\leq s-1$, a contradiction.
This shows that no two consecutive vertices of $C$ can be in $A$ and
thus the vertices of $C$ alternate between $A$ and $\mathcal{B}(H)\cap V(C)$.
Finally let us note that $r$ is divisible by four,
as otherwise $s$ is odd, so if $u_i\in A$ then $u_{i+s+1}\in A$, a contradiction.
\end{proof}

\begin{claim}\label{Claim:4}
Let $H, C, A$ be from Claim \ref{Claim: 3}.
Let $D=\{v\in V(C)|~d_H(v)\leq 3\}$ and $A'=\mathcal{B}(H-D)\setminus \mathcal{B}(H)$.
Then $|V(C)\backslash D|\leq 1$, $|A|=|A'|=|C|/2$ with $A\cap A'=V(C)\backslash D$,
and every vertex in $A-V(C)\backslash D$ is adjacent to a unique vertex in $A'-V(C)\backslash D$;
moreover, $H-D$ is 2-connected with $|\mathcal{B}(H-D)|=|\mathcal{B}(H)|$.
\end{claim}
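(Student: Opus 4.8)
Write $C=u_1u_2\cdots u_ru_1$ with $r=2s$, and by Claim~\ref{Claim: 3} relabel so that $A=\{u_1,u_3,\dots,u_{2s-1}\}$ while $\mathcal B(H)\cap V(C)=\{u_2,u_4,\dots,u_{2s}\}$, every even-indexed vertex having degree exactly $2$ in $H$. Since $C$ is induced, each $u\in A$ has exactly two neighbours on $C$ and at least one off $C$, so $d_H(u)\ge 3$; hence $D=\bigl(\mathcal B(H)\cap V(C)\bigr)\cup D_3$ where $D_3:=\{u\in A:d_H(u)=3\}$, and $V(C)\setminus D=\{u\in A:d_H(u)\ge 4\}$. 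Each $v\in D_3$ has a unique neighbour off $C$, denote it $\phi(v)$. The plan is to work through the deletion of $D$ from $H$ one vertex at a time, at each point invoking either the induction hypothesis of the main proof or the hypothesis that $H$ contains no good pair of cycles.

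\emph{Step 1: the degree bound.} Suppose toward a contradiction that some $u_i\in A$ has $d_H(u_i)\ge 5$, or that two distinct vertices of $A$ have degree at least $4$. Then $u_i$ (and, in the second case, the second vertex) has two or more neighbours in the connected graph $H-C$; choosing such neighbours of $u_i$ and of some second vertex $u_j\in A$ (which may be taken at distance $2$ from $u_i$ on $C$, so that one arc between them has length $2$), one gets at least two distinct $u_i$–$u_j$ paths with all internal vertices in $H-C$, and one can control their lengths well enough that — combining them with the two arcs of $C$ between $u_i$ and $u_j$ (of lengths $a$ and $2s-a$) and with $C$ itself (length $2s$) — the same parity-and-length bookkeeping used in the proof of Claim~\ref{Claim: 3} yields two cycles whose lengths differ by one or two. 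When $H-C$ is so small that the required paths cannot be found, delete instead one or two degree-$2$ vertices of $C$ neighbouring the high-degree vertex: the resulting graph is a subgraph of $H$, has $|V(H)|-2\ge f(k-1)$ vertices, and, because the high-degree assumption rules out the degree coincidences that would otherwise create new vertices of degree $\le 2$, has at most $k-1$ such vertices, so the induction hypothesis supplies a good pair. This proves $|V(C)\setminus D|\le 1$; and the exceptional vertex $w_0$, if present, must have $d_H(w_0)=4$, since otherwise $d_{H-D}(w_0)=d_H(w_0)-2\ge 3$ and $w_0\notin\mathcal B(H-D)$, contradicting the assertion $A\cap A'=V(C)\setminus D$.

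\emph{Step 2: deleting $D$.} By Step~1 the only vertices of $H-D$ whose degree decreases are $w_0$ — which loses exactly its two neighbours on $C$, so $d_{H-D}(w_0)=2$ and $w_0\in\mathcal B(H-D)\setminus\mathcal B(H)=A'$ — and the vertices $\phi(v)$, $v\in D_3$. Arguing once more by exhibiting a good pair otherwise (now using a short path through a vertex $\phi(v)$, or through a common value $\phi(v)=\phi(v')$ with $v\ne v'$, together with arcs of $C$), one shows that $\phi$ is injective and that $d_H(\phi(v))=3$ for every $v\in D_3$; equivalently $\phi(v)\notin\mathcal B(H)$ and $d_{H-D}(\phi(v))=2$, so $\phi(v)\in A'$. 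Since no other vertex of $H-D$ can drop to degree $\le 2$, we get $\mathcal B(H-D)=\bigl(\mathcal B(H)\setminus V(C)\bigr)\sqcup A'$ with $A'=\phi(D_3)\sqcup(V(C)\setminus D)$, whence $|A'|=|D_3|+|V(C)\setminus D|=|A|=|C|/2$ and $|\mathcal B(H-D)|=\bigl(k-|C|/2\bigr)+|C|/2=k=|\mathcal B(H)|$. Moreover $A\cap A'=V(C)\setminus D$ because every $\phi(v)$ lies off $C$, and each $v\in A\setminus(V(C)\setminus D)=D_3$ is adjacent to exactly one vertex of $A'\setminus(V(C)\setminus D)=\phi(D_3)$, namely $\phi(v)$.

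\emph{Step 3: two-connectivity of $H-D$, and the main obstacle.} The graph $H-D$ is connected, since it contains the connected graph $H-C$ and at most the single further vertex $w_0$, whose two $H-D$-neighbours both lie in $H-C$. Were $x$ a cutvertex of $H-D$, then $x\ne w_0$ (as $H-D-w_0=H-C$ is connected), so $x\in V(H-C)$ and $H-C-x$ is disconnected; but since $H$ is $2$-connected and $C$ is feasible — so $C$ meets the rest of $H$ only at the vertices of $A\subseteq D\cup\{w_0\}$ — any $H$-path joining the pieces of $H-C-x$ must cross $C$ through vertices of $D_3$, and rerouting it inside $H-C$ (which the $2$-connectivity of $H$ provides) produces two cycles of lengths differing by one or two, a contradiction. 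The real difficulty of the whole argument is this recurring verification, in Steps~1 and~2, that an extra edge at a vertex of $C$, or a coincidence among the $\phi$-values, forces a good pair of cycles: the underlying length arithmetic is exactly the routine one from Claim~\ref{Claim: 3} (the two arcs of an even cycle have lengths of the same parity, so one must ``spend'' the extra edge to shift a path length by precisely one), but the case analysis — and the need to pass to a subgraph and invoke the induction hypothesis in the degenerate cases, which is why the order hypotheses carry the additive slack $f(3)+2k$ — is where essentially all of the work lies.
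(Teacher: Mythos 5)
There is a genuine gap, and it is structural: you repeatedly promise to derive contradictions by ``exhibiting a good pair of cycles'' from degree coincidences via the ``routine length arithmetic of Claim~\ref{Claim: 3}'', but that arithmetic does not transfer, and the paper never argues this way. In Claim~\ref{Claim: 3} the arithmetic works only for (near-)antipodal pairs of $C$, where the two arcs differ by exactly one or two. For the configurations you need in Steps~1 and~2 the arcs are whatever they are: if two vertices of $A$ at cycle-distance $6$ on a $12$-cycle share an off-cycle neighbour, the resulting cycles have lengths $8$, $8$ and $12$ --- no good pair --- so your claim that a coincidence $\phi(v)=\phi(v')$, or $d_H(\phi(v))\neq 3$, can be refuted by direct cycle exhibition is false as stated. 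Likewise your primary Step~1 argument (``two $u_i$--$u_j$ paths with all internal vertices in $H-C$ whose lengths one can control'') has no control whatsoever over path lengths in $H-C$. The engine the paper actually uses is the induction hypothesis applied to a vertex-deleted subgraph, twice: (i) for $|V(C)\setminus D|\le 1$, delete the arc of $C$ strictly between two degree-$\ge 4$ vertices of $A$; that set contains one more vertex of $\mathcal B(H)$ than of $A$, its endpoints stay at degree $\ge 3$, and each deleted $A$-vertex creates at most one new low-degree vertex, so $|\mathcal B|$ drops to $k-1$ while the order stays $\ge f(k-1)$; (ii) for the structure of $A'$, observe that if $|A'|<|C|/2$ then $|\mathcal B(H-D)|\le k-1$ and $|V(H-D)|\ge f(k-1)$, so $H-D$ would contain a good pair. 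This lower bound $|A'|\ge |C|/2$, sandwiched against the trivial upper bound (at most one new low-degree vertex per edge from $D$ to $H-C$, plus possibly $w_0$), is what forces the injectivity of $\phi$, $d_H(\phi(v))=3$, and $d_H(w_0)=4$ simultaneously. You never invoke this lower bound; instead you assume the conclusions ($\phi$ injective, $d_{H-D}(\phi(v))=2$) in order to compute $|A'|=|A|$, and your derivation of $d_H(w_0)=4$ is openly circular (it cites the assertion $A\cap A'=V(C)\setminus D$ that is being proved).

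Two smaller points. Your fallback in Step~1 (``delete one or two degree-$2$ vertices of $C$ neighbouring the high-degree vertex'') only yields $|\mathcal B|\le k-1$ when the two degree-$\ge 4$ vertices are at distance two on $C$; in general the deleted vertex's other $C$-neighbour drops from degree $3$ to degree $2$ and the count does not decrease, which is exactly why the paper deletes the entire arc between the two high-degree vertices. And for $2$-connectivity of $H-D$, the paper simply checks $\delta(H-D)\ge 2$ (every vertex of $A'$ has degree exactly two in $H-D$), $|\mathcal B(H-D)|=k$ and the order bound, and then quotes Claim~\ref{Claim: 1}; your direct cut-vertex argument with ``rerouting produces a good pair'' is again the unsupported length-arithmetic move. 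Your own closing paragraph concedes that ``essentially all of the work'' lies in these verifications --- but that work, done by direct cycle exhibition, cannot be completed; the correct route is through the induction hypothesis.
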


\begin{proof}	
Suppose that there are two vertices say $u_j,u_{j+t}\in V(C)\backslash D$.
So $u_j, u_{j+t}\in A$ have degree at least four in $H$, $t\in [2, k]$ is even and we may assume that $d_H(u_{j+i})\leq 3$ for all $1\leq i\leq t-1$.
Let $H'$ be obtained from $H$ by deleting $S=\{u_{j+1},...,u_{j+t-1}\}$.
By Claim \ref{Claim: 3} we have $\mathcal{B}(H')\leq k-|S\cap \mathcal{B}(H)|+|S\cap A|=k-1$ and $|V(H')|\geq |V(H)|-k\geq f(k-1)$,
so $H'$ has a good pair of cycles. This contradiction proves that $|V(C)\backslash D|\leq 1$.
Let $|C|=2s$.

Consider $H-D$. We have that $|V(H-D)|$ is at least $f(2)$ for $k=3$ (by Claim \ref{Claim: 3}, $|D|\leq |C|=4$) and is at least $f(k-1)+f(3)$ for $k\geq 4$,
and $\mathcal{B}(H-D)=(\mathcal{B}(H)-\mathcal{B}(H)\cap V(C))\cup A'$.
So $|\mathcal{B}(H-D)|=k-s+|A'|$.
First we see $|A'|\geq s$; as otherwise $|\mathcal{B}(H-D)|\leq k-1$
and by induction $H-D$ has a good pair of cycles.
If $V(C)\backslash D=\emptyset$, then there are exactly $|A|=s$ edges between $C$ and $H-C$,
so the deletion of $D=V(C)$ will result in at most $s$ new vertices of degree at most two, that is $|A'|\leq s$.
Therefore in this case, $|A'|=s=|A|$ and $A\cup A'$ induces a matching of size $s$ in $H$.
Now suppose $V(C)\backslash D=\{u\}$.
There are exactly $s-1$ edges between $D$ and $H-C$,
so the deletion of $D$ will result in at most $s-1$ new vertices of degree at most two in $H-C$,
which, plus possibly $u\in A'$ (if and only if $d_H(u)=4$), show that $|A'|\leq s$.
Therefore, again we have $|A'|=s=|A|$; in fact $d_H(u)=4$, $A\cap A'=\{u\}$ and $A\cup A'-\{u\}$ induces a matching of size $s-1$ in $H$.

The above analysis also demonstrates that every vertex in $A'$ has degree two in $H-D$ and thus $\delta(H-D)\geq 2$.
By Claim \ref{Claim: 1}, $H-D$ is 2-connected. This finishes the proof.
\end{proof}

Let $G_0$ be a graph with $|\mathcal{B}(G_0)|=k$ and at least $f(k)$ vertices. Suppose for a contradiction that $G_0$ has no good pair of cycles.

We will define a sequence of subgraphs $G_0\supseteq G_1\supseteq... \supseteq G_m$.
First we show there exists a 2-connected $G_1\subseteq G_0$ with $|\mathcal{B}(G_1)|=k$ and $|V(G_1)|\ge f(k)-k$.
Let $S$ be the set of vertices of degree at most one in $G_0$ and let $G_1=G_0-S$.
Then $|\mathcal{B}(G_1)|\leq |\mathcal{B}(G_0)|-|S|+|S'|$ where $S'$ denotes the set of vertices in $G_1$ adjacent to $S$ (so clearly $|S'|\leq |S|$).
We must have $|\mathcal{B}(G_1)|=k$; otherwise, $|\mathcal{B}(G_1)|\leq k-1$ and $|V(G_1)|\geq f(k)-k\geq f(k-1)$, implying a good pair of cycles in $G_1\subseteq G_0$.
This shows that $S\cup S'$ induces a matching of size $|S|$ in $G_0$ and every vertex in $S'$ is in $\mathcal{B}(G_1)\backslash \mathcal{B}(G_0)$.
Therefore $\delta(G_1)\geq 2$. As $|V(G_1)|\geq f(k)-k\geq f(k-1)+f(3)$ for $k\geq 4$, by Claim \ref{Claim: 1} we conclude that $G_1$ is 2-connected.

Now suppose we have defined $G_i$ for some $i\geq 1$.
If $G_i$ is 2-connected with $|\mathcal{B}(G_i)|=k$ and of order at least $f(2)+4$ for $k=3$ and at least $f(k-1)+f(3)+2k$ for $k\geq 4$,
then
\begin{itemize}
\item let $C_i$ be a feasible cycle in $G_i$ (by Claim \ref{Claim: 2}), with the preference to be a four-cycle,
\item let $A_i=N_{C_i}(G_i-C_i)$, and further
\item let $D_i=\{v\in V(C_i)|~d_{G_i}(v)\leq 3\}$, $G_{i+1}=G_i-D_i$ and $A'_i=\mathcal{B}(G_{i+1})\setminus \mathcal{B}(G_i)$.
\end{itemize}
Otherwise we terminate.
Let $G_0\supseteq G_1\supseteq... \supseteq G_m$ be the sequence of subgraphs we obtain as above.
Then we can apply Claims \ref{Claim: 3} and \ref{Claim:4} for $G_i, C_i, A_i$ for each $1\leq i\leq m-1$.
In particular, we see that $G_i$ for all $1\leq i\leq m$ is 2-connected with $|\mathcal{B}(G_i)|=k$.
So the reason we terminate at $G_m$ is because the order of $G_m$ is at most $f(2)+3$ for $k=3$ and at most $f(k-1)+f(3)+2k-1$ for $k\geq 4$.
By \eqref{equ:f}, this implies that $\sum_{i=1}^{m-1} |D_i|=|V(G_1)|-|V(G_m)|$ is at least $5$ for $k=3$
and at least $(f(k)-k)-(f(k-1)+f(3)+2k-1)> 4k$ for $k\geq 4$.

In what follows, we will investigate properties on the cycles $C_1,...,C_{m-1}$,
which eventually will lead a contradiction to the above lower bound of $\sum_{i=1}^{m-1} |D_i|$.

We first show that if $A_i'\not\subseteq \mathcal{B}(G_{i+1})\cap V(C_{i+1})$,
then $C_{i+1}$ is a feasible cycle in $G_i$.
Clearly we have $A_i'\cap A_{i+1}=\emptyset$ (as any vertex in $A_i'$ has degree two in $G_{i+1}$,
while vertices in $A_{i+1}$ have degree at least three in $G_{i+1}$).
Note that $V(C_{i+1})$ is partitioned into $\mathcal{B}(G_{i+1})\cap V(C_{i+1})$ and $A_{i+1}$.
So $A_i'\not\subseteq \mathcal{B}(G_{i+1})\cap V(C_{i+1})$ implies that $A_i'$ intersects with $G_{i+1}-C_{i+1}$.
By Claim \ref{Claim:4}, we see that any vertex in $A_i'$ is adjacent to $D_i$,
so $G_{i+1}-C_{i+1}$ is adjacent to $D_i$.
Therefore, $G_i-C_{i+1}$ is connected and thus $C_{i+1}$ is feasible in $G_i$.

We now assert that there exists some $t$ such that $|C_1|=...=|C_t|=4$ and $|C_i|\geq 8$ for each $t+1\leq i\leq m-1$.
By Claim \ref{Claim: 3} each $|C_i|$ is divisible by four,
so it suffices to prove that if $|C_i|\geq 8$, then $|C_{i+1}|\geq 8$.
Suppose this is not true. Then $|C_{i+1}|=4<|C_i|$,
implying that $|A_i'|=|C_i|/2>|C_{i+1}|/2=|\mathcal{B}(G_{i+1})\cap V(C_{i+1})|$.
So $C_{i+1}$ is a feasible cycle (of length four) in $G_i$.
But this contradicts our preference for choosing $C_i$ in $G_i$, finishing the proof.

Next we claim that for each $1\leq i\leq t-1$, $C_{i+1}$ is feasible in $G_i$.
We have $|C_i|=|C_{i+1}|=4$. So $|A_i'|=2=|\mathcal{B}(G_{i+1})\cap V(C_{i+1})|$.
If $A_i'=\mathcal{B}(G_{i+1})\cap V(C_{i+1})$,
then this will give two possible configurations between $C_i$ and $C_{i+1}$
and in each case we can find a good pair of cycles easily
(of lengths 4 and 5, or 4 and 6).
So we may assume $A_i'\neq \mathcal{B}(G_{i+1})\cap V(C_{i+1})$.
Therefore, $C_{i+1}$ is a feasible cycle in $G_i$.

We also claim that for each $i\geq t+1$, $C_{i+1}$ is feasible in $G_i$.
Let $C_i=u_0u_1...u_{s-1}u_0$ where $s\geq 8$. So $|A_i|=|A_i'|\geq 4$.
By Claims \ref{Claim: 3} and \ref{Claim:4},
we may assume that $u_0,u_2,u_4\in A_i-V(C_i)\backslash D_i$
and $x,y,z\in A_i'-V(C_i)\backslash D_i$ with $u_0x, u_2y, u_4z\in E(G_i)$.
We may also assume that $x,y,z\in A_i'\subseteq \mathcal{B}(G_{i+1})\cap V(C_{i+1})$
(as otherwise, $C_{i+1}$ is feasible in $G_i$).
Let $P$ denote one of the two segments of $C_{i+1}$ between $x$ and $y$ with $|E(P)|\geq |C_{i+1}|/2$.
Clearly $|E(P)|\geq 4$ is even.
If $|E(P)|\equiv 2$ mod $4$, then $C'=P\cup xu_0u_1u_2y$ forms a cycle in $G_i$ whose length is $2$ mod $4$ and thus is at least 10.
Then there exist two vertices $a,b\in V(P)\cap A_{i+1}$ which divide the cycle $C'$ into two segments of lengths differing by two.
Also there exists a path with endpoints $a,b$ and all internal vertices in $G_{i+1}-C_{i+1}$, which is internally disjoint from $C'$.
Putting the above together, we find a good pair of cycles in $G_i$.
So $|E(P)|\equiv$ 0 mod $4$.
Similarly any segment of $C_{i+1}$ between $y$ and $z$ has length $0$ mod $4$.
Therefore any segment (say $Q$) of $C_{i+1}$ between $x$ and $z$ has length $0$ mod $4$ as well.
Then $Q\cup xu_0u_1u_2u_3u_4z$ forms a cycle of length $2$ mod $4$.
If $|E(Q)|=4$, then $C_i$ and $(C_i-\{u_1,u_2,u_3\})\cup u_0x\cup Q\cup zu_4$ form a good pair of cycles (whose lengths differ by two).
So $|E(Q)|\geq 8$. Again there exist two vertices $a,b\in V(Q)\cap A_{i+1}$ dividing the cycle $Q\cup xu_0u_1u_2u_3u_4z$
into two segments of lengths differing by two.
By similar arguments as above this enables us to find a good pair of cycles in $G_i$, proving the claim.

Finally we are completing the proof using the above two claims.
We point out that for each $1\leq i\leq t-1$, $C_{i+1}$ and $C_i$ play the same role in $G_i$.
Repeatedly applying this, one can conclude that in fact all four-cycles $C_1, C_2, ...,C_t$ are feasible in $G_1$.
By Claims \ref{Claim: 3} and \ref{Claim:4},
each $V(C_i)\backslash A_i=\mathcal{B}(G_1)\cap V(C_i)$ provides $|C_i|/2\geq |D_i|/2$ distinct vertices in $\mathcal{B}(G_1)$.
So
\begin{align*}
\sum_{i=1}^t |D_i|\leq \sum_{i=1}^t 2|\mathcal{B}(G_1)\cap V(C_i)|\leq 2|\mathcal{B}(G_1)|\leq 2k.
\end{align*}
Similarly, $C_{t+1}, C_{t+2},...,C_{m-1}$ are feasible in $G_{t+1}$ and using this fact, we can derive that
\begin{align*}
\sum_{i=t+1}^{m-1} |D_i|\leq \sum_{i=t+1}^{m-1} 2|\mathcal{B}(G_{t+1})\cap V(C_i)|\leq 2|\mathcal{B}(G_{t+1})|\leq 2k.
\end{align*}
Putting the above together, we have $\sum_{i=1}^{m-1} |D_i|\leq 4k$, but we have seen that $\sum_{i=1}^{m-1} |D_i|> 4k$ for $k\geq 4$.
This contradiction finishes the proof for the case $k\geq 4$.
For $k=3$, by Claim \ref{Claim: 3}, all feasible cycles must be of length four, so $t=m-1$.
And we obtained that $\sum_{i=1}^t |D_i|\geq 5$.
This tells that there are at least two four-cycles $C_1, C_2$, each providing two vertices in $\mathcal{B}(G_1)$.
But it is impossible as $3=|\mathcal{B}(G_1)|\geq 4$.
The proof now is completed. \qedB

\medskip

We didn't make lots of efforts to optimise the constant in the proof as we tend to believe that
the quadratic bound $O(k^2)$ can be further improved (perhaps, to a linear term $O(k)$).

To conclude this paper we would like to mention a conjecture of \cite{LM},
which seems to be a natural generalization for the question of Erd\H{o}s et. al.
and has implications (if true) for other related problems:
For any nonnegative integer $k$, every graph with minimum degree at least $k+1$ contains $k$ cycles
$C_1,...,C_k$ with $|C_{i+1}|=|C_i|+d$ for $d\in \{1,2\}$.

\bigskip

\noindent {\bf Acknowledgement.}
We would like to thank the referees for their careful reading and helpful suggestions.

\bigskip

\noindent {\bf Note added in proof.}
After this paper was submitted for publication, a proof to the problem mentioned in the last paragraph was announced in \cite{GHLM}.

\end{document}